\documentclass[a4paper,11pt]{amsart}
\usepackage[latin1]{inputenc}
\usepackage{amssymb,anysize}
\usepackage[cmtip,all]{xy}
\usepackage{hyperref}
\usepackage{cite}
\marginsize{3cm}{3cm}{2.5cm}{2.5cm}
\newcommand{\AAA}{\mathbb{A}}
\newcommand{\CC}{\mathbb{C}}
\newcommand{\kk}{\Bbbk}
\newcommand{\CCC}{C^{\bullet,\bullet}}
\newcommand{\RR}{\mathbf R}
\newcommand{\LL}{\mathbf L}
\newcommand{\DD}{\mathcal{D}}

\newcommand{\HH}{\mathcal{H}}

\newcommand{\OO}{\mathcal{O}}
\newcommand{\MM}{\mathcal{M}}
\newcommand{\NN}{\mathcal{N}}
\newcommand{\II}{\mathcal{I}}
\newcommand{\JJ}{\mathcal{J}}
\newcommand{\Acal}{\mathcal{A}}
\newcommand{\MV}{\operatorname{MV}_{\{Y_i\}}}
\newcommand{\Gamba}{\Gamma_{[\{Y_i\}]}}
\newcommand{\EI}{\mbox{ }^\text{I}E}
\newcommand{\EII}{\mbox{ }^\text{II}E}
\newcommand{\Mod}{\mathcal{M}od}
\newcommand{\Hom}{\mathcal{H}om}
\newcommand{\Cha}{\mathcal{C}ha}
\newcommand{\Tot}{\operatorname{Tot}}
\newcommand{\DR}{\operatorname{DR}}
\newcommand{\ra}{\rightarrow}
\newcommand{\lra}{\longrightarrow}
\newcommand{\hra}{\hookrightarrow}
\newcommand{\D}{\text{D}^\text{b}}
\newcommand{\Dc}{\text{D}_{\text{c}}^\text{b}}

{\theoremstyle{definition}
\newtheorem{defi}{Definition}[section]}
{\theoremstyle{remark}
\newtheorem{nota}[defi]{Remark}}
\newtheorem{teo}[defi]{Theorem}
\newtheorem{prop}[defi]{Proposition}
\newtheorem{coro}[defi]{Corollary}
\newtheorem{lema}[defi]{Lemma}

\begin{document}

\title{Two Mayer-Vietoris Spectral Sequences for $\mathcal{D}$-modules}
\author{Alberto Castaño Domínguez}
\thanks{The author is supported by FQM218, MTM2010-19298 and P12-FQM-2696.}
\email{albertocd@us.es}
\address{Departamento de Álgebra \& Instituto de Matemáticas de la Universidad de Sevilla (IMUS), Universidad de Sevilla. Edificio Celestino Mutis. Avda. Reina Mercedes s/n 41012 Sevilla}
\subjclass[2010]{Primary 14B15, 14F05, 14F10, 18G40}
\date{May 2014}

\begin{abstract}
We provide two Mayer-Vietoris-like spectral sequences related to the localization over the complement of a closed subvariety of an algebraic variety by using techniques from $\mathcal{D}$-modules and homological algebra. We also give, as an application of the previous, a method to calculate the cohomology of the complement of any arrangement of hyperplanes over an algebraically closed field of characteristic zero.
\end{abstract}

\maketitle

\section{Introduction}

Local cohomology and localization of sheaves of abelian groups have been of interest since the sixties, when Grothendieck introduced them in a seminar at Harvard (\hspace{-.5pt}\cite{Ha2}). Since then, they have become a common tool when working in algebraic geometry or commutative algebra, for they appear naturally when studying sheaf cohomology, $\DD$-modules, depth or cohomological dimension.

An algebraic variety, or just variety, will mean for us an equidimensional separated finite type scheme, reducible or not, over an algebraically closed field of characteristic zero. In this note we give two Mayer-Vietoris spectral sequences of the localization of certain $\OO_X$-modules over the open complement of a closed subvariety $Y=\bigcup_i Y_i$ of an algebraic variety $X$ over an algebraically closed field of characteristic zero. For a complex of $\OO_X$-modules $\MM\in\D(\OO_X)$, one can define the localization of $\MM$, denoted by $\RR\MM(*Y)$, as the image of $\MM$ by the right derived functor of $\varinjlim_k\HH om_{\OO_X}\left(\JJ_Y^k,\bullet\right)$, $\JJ_Y$ being the ideal of definition of $Y$. If $\MM$ is of quasi-coherent cohomology, Grothendieck's classical version and this one coincide. For this functor we prove in theorem \ref{MV} the existence of the spectral sequence of bounded complexes of quasi-coherent $\OO_X$-modules
$$E_1^{p,q}=\bigoplus_{|I|=1-p}\RR^q\MM(*Y_I)\Rightarrow_p\RR^{p+q}\MM(*Y),$$
where $Y_I$ is the intersection of the components (not necessarily irreducible) $Y_i$ for $i\in I$. This way of dividing $Y$ and taking the spectral sequence is completely analogous to how Àlvarez Montaner, García López and Zarzuela Armengou acted with local cohomology of modules (with support in certain ideals) in \cite{AGZ}, work which was generalized by Lyubeznik in \cite{Ly}.

As the title says, there is another spectral sequence provided in theorem \ref{MVrel}, very related to the one written above, but in a relative version. To achieve that, we work with $\DD_X$-modules, by using the direct image functor in the derived category of coherent $\DD$-modules associated with a morphism $f:X\ra Z$, denoted by $f_+$. The spectral sequence takes a complex of $\DD_X$-modules $\MM\in \Dc(\DD_X)$ and deals with complexes of $\DD_Z$-modules like this:
$$E_1^{p,q}=\bigoplus_{|I|=1-p}\HH^q f_+\RR\MM(*Y_I) \Rightarrow_p\HH^{p+q}f_+\RR\MM(*Y).$$

Despite the abundant presence of Mayer-Vietoris-like spectral sequences in the literature, we only found an analogue of the second one when $f$ is a projection over a point in \cite[Sommes trig. 2.6.2*]{SGA4}, but using $\ell$-adic cohomology with compact support.

The relative spectral sequence allows us to compute in a purely algebraic way the global algebraic de Rham cohomology of the complement of an (affine or projective) arrangement of hyperplanes over any algebraically closed field of characteristic zero. In the case it were $\CC$, by \cite[Theorem 1']{Gr} we know that the global algebraic de Rham cohomology of that complement is the same as its singular cohomology, giving in particular a proof of the well known result of Orlik and Solomon \cite[5.3]{OS}, whose original proof requires more background on the combinatorics of the intersection poset of the arrangement and its characteristic and Poincaré polynomials.

\vspace{.25cm}

\textbf{Acknowledgements.} The author wants to thank his doctoral advisors, Luis Narváez Macarro and Antonio Rojas León, the suggestion of the topic, the encouragement to tackle it and their careful reading of previous versions of this text.

\section{Basics on spectral sequences}

In this section we will recall some facts about spectral sequences that will be useful in the following. We will only work with cohomological spectral sequences, so that adjective will be omitted.

\begin{defi}
A spectral sequence in an abelian category $\Acal$ is a family $\{E_r^{p,q}\}$ of objects in $\Acal$ for every integers $p,q$ and for every integer $r\geq0$, such that for each $(p,q,r)$ there is a morphism, called differential, $d_r^{p,q}:E_r^{p,q}\ra E_r^{p+r,q-r+1}$ satisfying that $d_r^{p+r,q-r+1}\circ d_r^{p,q}=0$.

The subfamily of objects $E_r:=\{E_r^{p,q}\}$ for a fixed $r$ is called the $r$-th page, or sheet, of the spectral sequence, and we name the family of all differentials $d_r^{p,q}$ with $r$ fixed $d_r:E_r\ra E_r$. The chain condition for the $d_r^{p,q}$ can be written as $d_r^2=0$.

Moreover, we also have isomorphisms
$$\HH^{p,q}(E_r)=\ker d_r^{p,q}/\operatorname{im} d_r^{p-r,q+r-1}\stackrel\sim\lra E_{r+1}^{p,q}.$$
\end{defi}

\begin{defi}
Let $E=\{E_r^{p,q}\}$ be a spectral sequence such that for every $r\geq r(p,q)$, it holds that $E_r^{p,q}=E_{r(p,q)}^{p,q}$. We define the limit term of $E$ as $E_{\infty}^{p,q}:=E_{r(p,q)}^{p,q}$, and we say that $E$ abuts to $E_{\infty}$.
\end{defi}

The limit term of a spectral sequence is what gives us the desired information. There are some cases in which it exists and is easy to compute:

\begin{nota}
Let $E$ be a spectral sequence. If there exists a $r_0\geq0$ such that $d_r=0$ for every $r\geq r_0$, then $E_{r_0}=E_{\infty}$, for $E_{r+1}=\HH(E_r)=E_r$. In that case we say that $E$ degenerates at $r_0$.

Now suppose that there exists an $r_0\geq2$ such that $E_{r_0}$ is concentrated in a single row or column. Then we have that every differential $d_r^{p,q}$ departs from or arrives at the zero object, so the spectral sequence degenerates at the $r_0$-th page. In this special case of degeneration we say that the spectral sequence collapses at the $r_0$-th sheet.
\end{nota}

\begin{defi}
Let $E$ be a spectral sequence. It is said to converge if there exists a graded object $H^{\bullet}$, with a finite filtration $F^{\bullet}H^{\bullet}$, such that the limit term of $E$ is the graded complex associated to $F^{\bullet}$, that is,
$$E_{\infty}^{p,q}=G^pH^{p+q}=F^pH^{p+q}/F^{p+1}H^{p+q}.$$
We denote this by $E_r^{p,q}\Rightarrow_pH^{p+q}$.
\end{defi}

This is what spectral sequences are for; they usually allow us to calculate an approximation by means of a filtration of an interesting filtrated object hard to deal with, by computing some other objects in a simpler way.

For instance, if $E$ is a spectral sequence collapsing at the $s$-th page, it converges to $H^{\bullet}$, where $H^n$ is the only $E_s^{p,q}\neq0$ such that $p+q=n$.

We are going to introduce a special kind of spectral sequences that will be of help in the following: the spectral sequences of a double complex. Recall that a double complex in $\Acal$ is a bigraded complex $\CCC$ with differentials $d_I^{p,q}:C^{p,q}\ra C^{p+1,q}$ and $d_{II}:C^{p,q}\ra C^{p,q+1}$ such that $d_I^2=d_{II}^2=d_Id_{II}+d_{II}d_I=0$.

\begin{nota}
With each complex of complexes $\mathbf{C}=\left(C^{\bullet}\right)^{\bullet}$ we can associate a bicomplex in an obvious way just by taking as vertical differentials those of $\mathbf{C}$ and horizontal differentials the ones of $\mathbf{C}$ multiplied by $(-1)^q$ in the $q$-th row.
\end{nota}

\begin{defi}
Let $\CCC$ be a double complex. Its total complex, $\Tot(C)^{\bullet}$, is the complex given by
$$\Tot(C)^n=\bigoplus_{p+q=n}C^{p,q},$$
with differentials $d_T$ given by $d_T=d_I+d_{II}$. It can be endowed with two filtrations, the horizontal and vertical ones, given respectively by
$$F_I^p(\Tot(C)^n)=\bigoplus_{r+s=n,r\leq p}C^{r,s} \text{ and } F_{II}^p(\Tot(C)^n)=\bigoplus_{r+s=n,s\leq p}C^{r,s}.$$
\end{defi}

\begin{prop}
Let $\CCC$ be a double complex. Then, there exist two spectral sequences, called usual, $\EI$ and $\EII$, given by
$$\EI_0^{p,q}=\EII_0^{p,q}=C^{p,q} \text{ and } \EI_1^{p,q}=\HH^p(C^{\bullet,q});\,\EII_1^{p,q}=\HH^q(C^{p,\bullet}).$$
If the bicomplex $\CCC$ can be translated to occupy either the first or the third quadrant, both spectral sequences converge to the cohomology of the total complex, that is,
$$\EI_{\infty}^{p,q}\Rightarrow_p\HH^{p+q}(Tot(C)^{\bullet}) \text{ and } \EII_{\infty}^{p,q}\Rightarrow_p \HH^{p+q}(Tot(C)^{\bullet}).$$
\end{prop}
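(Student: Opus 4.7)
The plan is to realize each of $\EI$ and $\EII$ as the spectral sequence associated with a filtered complex, by filtering the total complex $\Tot(C)^{\bullet}$ with $F_I^{\bullet}$ in one case and $F_{II}^{\bullet}$ in the other.

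First I would check that $F_I^p\Tot(C)^{\bullet}$ and $F_{II}^p\Tot(C)^{\bullet}$ are subcomplexes of $\Tot(C)^{\bullet}$. Since the total differential is $d_T=d_I+d_{II}$ and each of $d_I$ and $d_{II}$ increases exactly one of the two indices by one, both filtrations are stable under $d_T$; this is a routine check.

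Next I would invoke the classical construction that attaches to any filtered complex $(K^{\bullet},F^{\bullet})$ a spectral sequence whose $E_0$ page is the associated graded complex. Specialising this to $(\Tot(C)^{\bullet},F_I^{\bullet})$, the graded piece $F_I^p/F_I^{p-1}$ in total degree $p+q$ is canonically identified with $C^{p,q}$, so $E_0^{p,q}=C^{p,q}$. The induced $d_0$ is the part of $d_T$ that preserves the filtration step; the other part raises the filtration index by one and hence vanishes in the associated graded. Taking cohomology with respect to $d_0$ on each column (or row) then yields the formula stated for the $E_1$ page. An entirely symmetric argument using $F_{II}^{\bullet}$ produces the other spectral sequence.

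For convergence, the quadrant hypothesis ensures that in each fixed total degree $n$ only finitely many $C^{p,q}$ with $p+q=n$ are nonzero, so both filtrations on $\Tot(C)^n$ are regularly bounded: there exist $a\leq b$, depending on $n$, with $F^a\Tot(C)^n=0$ and $F^b\Tot(C)^n=\Tot(C)^n$. The classical convergence theorem for bounded filtered complexes then yields that both spectral sequences converge to $\HH^{p+q}(\Tot(C)^{\bullet})$, endowed with the induced filtration. The main obstacle, such as it is, is not conceptual but rather the careful bookkeeping of sign conventions and index shifts required to make the induced $d_0$ match the expected horizontal or vertical differential and the resulting $E_1$ page agree with the statement; the underlying construction is otherwise standard homological algebra, for which one may consult any reference on spectral sequences.
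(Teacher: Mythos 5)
Your proposal is correct and is essentially the argument the paper relies on: the paper simply reduces (by translation invariance of the construction) to a first- or third-quadrant bicomplex and cites the standard result \cite[11.17]{Ro}, whose proof is exactly the filtered-complex construction you describe, with the quadrant hypothesis giving the degreewise boundedness of the two filtrations needed for convergence. The only caveat is the bookkeeping you already flag: with the paper's labelling, the filtration by the first index yields the spectral sequence whose $E_1$ is the \emph{vertical} cohomology $\HH^q(C^{p,\bullet})$ (the paper's $\EII$), and vice versa, so one must match filtrations to the names $\EI$, $\EII$ consistently.
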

\begin{proof}
Take into account that if we translate to the first or third quadrant our complex, we do not change the structure of its associated usual spectral sequences, so we can assume that it lies directly on one of those quadrants and then apply \cite[11.17]{Ro}.
\end{proof}

A complex having a finite number of nonvanishing and left or right bounded rows or columns fulfills the condition of the proposition. Note that although both spectral sequences have a grading of the total complex as limit term, they do not need to be the same, since the filtrations that induce them are different.

Spectral sequences arising from double complexes appear very frequently, but this is not the only way to obtain a spectral sequence. Two further constructions are the spectral sequences associated with an exact couple or a filtered complex. See, for example, \cite[\S~11]{Ro} for more information.

\section{Mayer-Vietoris spectral sequence}

For any variety $Z$, we will denote by $\pi_Z$ the projection from $Z$ to a point. In what follows, $X$ will denote a smooth algebraic variety, and $Y\subseteq X$ will be a closed subvariety of $X$ defined by the ideal $\JJ_Y$. Whenever we talk about a complex of $\OO_X$- or $\DD_X$-modules, we will understand them as objects of the corresponding derived category of bounded complexes, which will be clear from the context.

After \cite[Remark 5]{Gr}, we can define the functor $\bullet(*Y)$ of $\Mod(\OO_X)$ given by
$$\MM(*Y):=\varinjlim_k\HH om_{\OO_X}\left(\JJ_Y^k,\MM\right).$$

\begin{nota}
Let $\II^{\bullet}$ be an acyclic complex of injective $\OO_X$-modules. Since $\HH om_{\OO_X}(\bullet,\II^q)$ is an exact functor for every $q$, the complex $\HH om_{\OO_X}(\JJ_Y^k,\II^{\bullet})$ will be acyclic for every $k$, and so will be $\II(*Y)$ because direct limits commute with cohomology as long as it is an exact functor. Therefore, by \cite[I.5.1]{Ha1}, the functor $\bullet(*Y)$ is left exact and can be right derived to provide a functor
$$\RR\bullet(*Y): \D(\OO_X)\lra \D(\OO_X).$$
\end{nota}

\begin{nota}\label{acyc}
Let $j:X-Y\hra X$ denote the open immersion from the complement of $Y$ into $X$, and let us define (cf. \cite[I.6.1]{Me}) the algebraic local cohomology of an $\OO_X$-module $\MM$ as
$$\RR^i\Gamma_{[Y]}(\MM):=\varinjlim_k\RR^i\HH om_{\OO_X} \left(\OO_X/\JJ_Y^k,\MM\right).$$
Because of the same reason as above, $\Gamma_{[Y]}$ is a left exact functor. From the exact sequence $0\ra\JJ_Y^k\ra\OO_X\ra\OO_X/\JJ_Y^k\ra0$ and \cite[Corollary 1.9, 2.8]{Ha2}, we obtain a commutative diagram
$$\xymatrix{ 0 \ar[r] & \Gamma(Y,\MM) \ar[r] & \MM \ar[r] & j_*j^{-1}\MM \ar[r] & \RR^1\Gamma(Y,\MM) \ar[r] &0\\
0 \ar[r] & \Gamma_{[Y]}(\MM) \ar[r] \ar[u] & \MM \ar[r] \ar[u] & \MM(*Y) \ar[r] & \RR^1\Gamma_{[Y]}(\MM) \ar[r] \ar[u] &0},$$
where the first and fourth objects of the top and the bottom row are, respectively, the first two local cohomology modules of $\MM$ over $Y$ and their algebraic counterparts (cf. \cite{Ha2}). Then we have a morphism $\MM(*Y)\ra j_*j^{-1}\MM$, which, again by \cite[2.8]{Ha2}, becomes an isomorphism if $\MM$ is of quasi-coherent cohomology, as well as with $\RR\MM(*Y)\ra\RR j_*j^{-1}\MM$.

As a consequence, for every quasi-coherent injective $\OO_X$-module $\II$, we have that $\II(*Y)=j_*j^{-1}\II$ is another quasi-coherent injective $\OO_X$-module by \cite[1.4.10]{EGA}.
\end{nota}

From now on, let us assume that $Y$ can be decomposed as the union of $r$ different closed subvarieties $Y_i\subseteq X$, $i=1,\ldots,r$. For each $I\subseteq\{1,\ldots,r\}$, we will write $Y_I=\bigcap_{i\in I}Y_i$. If $I=\emptyset$, $Y_I=Y$.

\begin{defi}
We define the functor $\MV:\Mod(\OO_X)\lra\mathcal{C}(\OO_X)$ given by
$$\MV^p(\MM)=\left\{\begin{array}{cl}
\displaystyle\bigoplus_{|I|=1-p}\MM(*Y_I) & p=-(r-1),\ldots,0\\
0 & \text{otherwise}\end{array}\right.,$$
with connecting morphisms consisting of an alternating sum of the canonical morphisms $\rho_{I,J}:\MM(*Y_I)\ra\MM(*Y_J)$ whenever $I\supset J$ induced by the inclusions of the respective ideals of definition, $\eta_{J,I}:\JJ_{Y_J}\hra\JJ_{Y_I}$. More precisely, if we denote by $I_j$ the subset resulting of taking out of $I$ its $j$-th element,
$$\begin{array}{rcl} \displaystyle\bigoplus_{|I|=1-p}\MM(*Y_I)& \lra& \displaystyle \bigoplus_{|J|=-p}\MM(*Y_J)\\
\alpha_I&\longmapsto & \displaystyle \bigoplus_{j=0}^{-p}(-1)^j\rho_{I,I_j}\left(\alpha_I \right)\end{array}.$$
It is straightforward to see that these morphisms make $\MV(\MM)$ into a complex.

Any morphism between two $\OO_X$-modules $\MM$ and $\NN$ gives rise to a morphism between $\MM(*T)$ and $\NN(*T)$ for every closed subvariety $T\subset X$, just by applying the corresponding hom functor and taking direct limits. Thus the image by $\MV$ of a morphism $\MM\ra\NN$ is just the chain map consisting of the direct sum of their associated morphisms at every degree.
\end{defi}

\begin{prop}\label{exacta}
Let $\II$ be an injective $\OO_X$-module. Then the complex $\MV(\II)$ is exact except in degree zero, in which its cohomology is $\II(*Y)$.
\end{prop}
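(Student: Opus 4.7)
I plan to prove the stronger statement that the complex
$$K^\bullet:\quad 0\to\II(*Y_{\{1,\ldots,r\}})\to\cdots\to\bigoplus_i\II(*Y_i)\to\II(*Y)\to 0,$$
obtained from $\MV(\II)$ by augmenting on the right with $\II(*Y)$ via the sum of the canonical localizations $\II(*Y_i)\to\II(*Y)$, is exact; this simultaneously yields the vanishing of $H^p(\MV(\II))$ in negative degrees and the identification of $H^0(\MV(\II))$ with $\II(*Y)$. The starting point is that for any injective $\II$ we have $\RR^i\Gamma_{[Z]}(\II)=0$ for $i>0$ (since $\Gamma_{[Z]}$ is the filtered colimit of the functors $\HH om(\OO_X/\JJ_Z^k,\cdot)$ and $\HH om(\cdot,\II)$ is exact), so that the four-term sequence of remark \ref{acyc} collapses to short exact sequences $0\to\Gamma_{[Z]}(\II)\to\II\to\II(*Z)\to 0$. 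Applying this for every $Z=Y_I$, and also for $I=\emptyset$ under the convention $Y_\emptyset:=Y$ that encodes the augmentation at $p=1$, one assembles a short exact sequence of complexes
$$0\longrightarrow D^\bullet\longrightarrow C^\bullet\longrightarrow K^\bullet\longrightarrow 0,$$
in which $C^p=\bigoplus_{|I|=1-p}\II$, $D^p=\bigoplus_{|I|=1-p}\Gamma_{[Y_I]}(\II)$, and the differentials are the signed face maps induced by the inclusions $\Gamma_{[Y_I]}(\II)\hookrightarrow\Gamma_{[Y_{I\smallsetminus\{j\}}]}(\II)$ (and by the identity on $\II$).

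By its combinatorial shape, $C^\bullet$ is the augmented simplicial chain complex of the $(r-1)$-simplex on the vertex set $\{1,\ldots,r\}$ with constant coefficients in $\II$, so contractibility of the simplex makes its reduced cohomology vanish. The long exact cohomology sequence attached to the short exact sequence of complexes above thereby reduces the proposition to showing that $D^\bullet$ is also acyclic. For that one combines the identification $\Gamma_{[Y_I]}(\II)=\bigcap_{i\in I}\Gamma_{[Y_i]}(\II)$, which holds inside $\II$ unconditionally, with $\Gamma_{[Y]}(\II)=\sum_i\Gamma_{[Y_i]}(\II)$, which holds because injective $\II$ is flasque; then, decomposing $\II$ into indecomposable injective summands $E_\alpha$ supported on irreducible closed sets $Z_\alpha$, each contribution $D^\bullet(E_\alpha)$ is supported exactly on the subsets $I\subseteq T_\alpha:=\{i:Z_\alpha\subseteq Y_i\}$, and equals the augmented chain complex of the simplex on $T_\alpha$ tensored with $E_\alpha$ (or is zero if $T_\alpha=\emptyset$), hence is acyclic by contractibility once more.

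The most delicate step is this last reduction, since it invokes both the decomposition of an injective quasi-coherent sheaf on the Noetherian scheme $X$ and the flasque sum identity for $\Gamma_{[Y]}$. One can instead induct on $r$, splitting $Y=Y_1\cup Y'$ with $Y':=Y_2\cup\cdots\cup Y_r$ and using as the base step the two-variable short exact sequence $0\to\Gamma_{[Y_1\cap Y_2]}(\II)\to\Gamma_{[Y_1]}(\II)\oplus\Gamma_{[Y_2]}(\II)\to\Gamma_{[Y_1\cup Y_2]}(\II)\to 0$, which is immediate from the Mayer--Vietoris long exact sequence of local cohomology together with the vanishing $\RR^i\Gamma_{[\cdot]}(\II)=0$ for $i>0$.
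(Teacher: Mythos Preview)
Your argument follows essentially the same architecture as the paper's: both construct the short exact sequence of complexes $0\to D^\bullet\to C^\bullet\to K^\bullet\to 0$ (the paper's unaugmented versions are called $\Gamba$, $\Cha$, $\MV$), identify $C^\bullet$ with the simplicial chain complex of the $(r-1)$-simplex, and reduce everything to the cohomology of $D^\bullet$. Your choice to augment all three complexes so that the target is ``acyclic'' rather than ``cohomology concentrated in degree zero'' is purely cosmetic. The one substantive difference is how the $D^\bullet$ step is handled: the paper passes to stalks and invokes \cite[2.1]{Ly}, whereas you supply a self-contained argument via the Matlis decomposition of $\II$ into indecomposable injectives, together with the observation that each summand contributes the augmented chain complex of a full simplex on $T_\alpha$.

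Two remarks. First, your Matlis-decomposition argument tacitly assumes $\II$ is a \emph{quasi-coherent} injective, which the statement of the proposition does not; this is harmless for the applications in the paper (where the resolutions are always quasi-coherent), but you should flag it. Second, the alternative you offer---induction on $r$ via the Mayer--Vietoris short exact sequence for $\Gamma_{[Y_1\cup Y_2]}$---does avoid quasi-coherence, but you have only written the base case $r=2$. The inductive step requires splitting the index sets according to whether $1\in I$ and recognizing a short exact sequence of augmented $D$-complexes associated with the coverings $\{Y_2,\ldots,Y_r\}$ and $\{Y_1\cap Y_2,\ldots,Y_1\cap Y_r\}$; this is routine but deserves a sentence.
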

\begin{proof}
To prove this statement we will introduce two complexes. Let us define $\Gamba(\MM)$ to be the complex defined by
$$\Gamba^p(\MM)=\left\{\begin{array}{cl}
\displaystyle\bigoplus_{|I|=1-p}\Gamma_{[Y_I]}(\MM) & p=-(r-1),\ldots,0\\
0 & \text{otherwise}\end{array}\right.,$$
with morphisms given by
$$\begin{array}{rcl} \displaystyle\bigoplus_{|I|=1-p}\Gamma_{[Y_I]}(\MM)& \lra& \displaystyle \bigoplus_{|J|=-p}\Gamma_{[Y_J]}(\MM)\\
\alpha_I&\longmapsto & \displaystyle \bigoplus_{j=0}^{-p}(-1)^j\rho_{I,I_j}^L\left(\alpha_I \right)\end{array}$$
as chain maps, $\rho_{I,I_j}^L$ being the morphisms associated with the canonical inclusions $\eta_{J,I}:\JJ_{Y_J}\hra\JJ_{Y_I}$ for $J\subseteq I$. As with $\MV$, it can easily be proved that it is a complex.

The other complex that we will provide, denoted by $\Cha(\MM)$, mimics this behaviour of $\Gamba(\bullet)$ and $\MV(\bullet)$, but taking as objects just copies of $\MM$. Namely,
$$\Cha^p(\MM)=\left\{\begin{array}{cl}
\displaystyle\bigoplus_{|I|=1-p}\MM & p=-(r-1),\ldots,0\\
0 & \text{otherwise}\end{array}\right..$$
The chain maps are just alternating sums of identity morphisms as with the other two complexes.

Now for every injective $\OO_X$-module $\II$, we can form an exact sequence
$$0\lra\Gamba(\II)\lra\Cha(\II)\lra\MV(\II)\lra0,$$
where, at each index, we take the exact sequence induced by applying direct sums, direct limits and the exact functor (since $\II$ is injective) $\HH om_{\OO_X}(\bullet,\II)$ to
$$0\lra\JJ_{Y_I}^k\lra\OO_X\lra\OO_X/\JJ_{Y_I}^k\lra0.$$

Thanks to \cite[2.1]{Ly} we know that, for every $x\in X$, $\Gamba(\II)_x$ is exact except at degree zero, in which its cohomology is $\Gamma_{[Y]}(\II)_x$. On the other hand, $\Cha(\II)_x$ is just the simplicial complex of cohomology associated with the standard $(r-1)$-simplex $\Delta^{r-1}$ with coefficients in the abelian group $\II_x$. Consequently, its $p$-th cohomology will vanish but for $p=0$, being $\II_x$ at that point.

Thus if we take stalks at $x$ on our exact sequence of complexes and form its long exact sequence of cohomology, we can deduce that at every $x\in X$ the cohomology of $\MV(\II)_x$ vanishes everywhere except in zero degree, being there $\II_x/\Gamma_{[Y]}(\II)_x\cong\II(*Y)_x$.

Having the same for every stalk, we can go upstairs to $X$ thanks to \cite[2.6]{Iv} and obtain what we wanted to prove.
\end{proof}

Once we have settled that important fact that we will use in the following, we can state our main result in this section.

\begin{teo}\label{MV}
For every $\MM\in \text{\emph{D}}_\text{\emph{qc}}^{\text{\emph{b}}}(\OO_X)$, there exists a spectral sequence of the form
$$E_1^{p,q}=\bigoplus_{|I|=1-p}\RR^q\MM(*Y_I)\Rightarrow_p\RR^{p+q}\MM(*Y).$$
\end{teo}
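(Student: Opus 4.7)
The plan is to apply the complex-valued functor $\MV$ to an injective resolution of $\MM$ and extract the spectral sequence from the resulting double complex. Fix a bounded-below quasi-coherent injective resolution $\MM \to \II^\bullet$ (available because $\MM \in \Dqc(\OO_X)$) and form $\CCC := \MV(\II^\bullet)$. Its horizontal range $\{-(r-1),\ldots,0\}$ is finite and its vertical range is bounded below, so after a translation $\CCC$ sits in the first quadrant, and consequently both usual spectral sequences $\EI$ and $\EII$ of $\CCC$ converge to $\HH^\bullet(\Tot(\CCC))$.

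For $\EI$, I would take horizontal cohomology first. By Proposition \ref{exacta} every row $\MV(\II^q)$ is exact except at $p = 0$, where its cohomology is $\II^q(*Y)$; thus $\EI_1$ is concentrated in the column $p = 0$ and coincides with $\II^\bullet(*Y)$. Taking vertical cohomology and using that injective $\OO_X$-modules are acyclic for the left exact functor $\bullet(*Y)$ yields $\EI_2^{0,q} = \RR^q\MM(*Y)$. Being concentrated in a single column, $\EI$ collapses at $E_2$ and identifies $\HH^n(\Tot(\CCC)) \cong \RR^n\MM(*Y)$, which pins down the abutment of the spectral sequence I am after.

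For $\EII$, I would take vertical cohomology first. The $p$-th column of $\CCC$ is $\bigoplus_{|I| = 1-p} \II^\bullet(*Y_I)$, and since injectives are likewise acyclic for every functor $\bullet(*Y_I)$, its $q$-th cohomology is $\bigoplus_{|I| = 1-p} \RR^q\MM(*Y_I)$, which is exactly the desired $E_1$-page. Together with the abutment already identified via $\EI$, this gives the spectral sequence claimed in the theorem. The real content of the argument has already been expended in Proposition \ref{exacta}; what remains is the standard mechanism of the two usual spectral sequences of a bounded bicomplex, and the only minor point to verify — that a single injective resolution simultaneously computes $\RR\MM(*Y_I)$ for every $I$ — is automatic, since any injective object is acyclic for every left exact functor.
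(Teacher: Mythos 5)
Your proposal is correct and follows essentially the same route as the paper: the same double complex $C^{p,q}=\MV^p(\II^q)$ built from a quasi-coherent injective resolution, the same use of Proposition \ref{exacta} to collapse the first usual spectral sequence onto the column $p=0$ identifying the abutment as $\RR^{p+q}\MM(*Y)$, and the same computation of the second usual spectral sequence giving the stated $E_1$-page. No gaps.
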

\begin{proof}
Let us take a quasi-coherent $\OO_X$-injective resolution $\II$ of $\MM$ (we can do it thanks to \cite[II.7.18]{Ha1}), and form the double complex $\CCC$, given by $C^{p,q}=\MV^p(\II^q)$, with vertical differentials given by the images by the functor $\MV^p$ of the ones of $\II^\bullet$, and horizontal differentials those of $\MV^\bullet(\II^q)$ multiplied by $(-1)^q$.

Since $C^{\bullet,\bullet}$ occupies the first quadrant (and $r$ nonzero columns), its usual spectral sequences will converge to the cohomology of the total complex, $\HH^n\left(\Tot(\CCC)\right)$.

The first sheet of the first of those usual spectral sequences is, by proposition \ref{exacta},
$$\EI_1^{p,q}=\HH^p\left(\MV^\bullet(\II^q)\right)=\left\{
\begin{array}{cl}
\II^q(*Y)& p=0\\
0 & \text{otherwise}\end{array}\right.$$
Now, since the second page of this spectral sequence is the vertical cohomology of the first one and the latter is concentrated in one column, we have that
$$\EI_2^{p,q}=\HH^q\left(\HH^p\left(\MV^\bullet(\II^q)\right)\right)=\left\{
\begin{array}{cl}
\RR^q\MM(*Y)& p=0\\
0 & \text{otherwise}\end{array}\right.,$$
so $\EI_r$ collapses and $\HH^n\left(\Tot(\CCC)\right)=\EI_2^{0,n}=\RR^n\MM(*Y)$.

On the other hand, the first page of the other usual spectral sequence is given by $\EII_1^{p,q}=\HH^q\left(C^{p,\bullet}\right)$. In our context, we have by definition that
$$\EII_1^{p,q}=\HH^q\left(\MV^p(\II^{\bullet})\right)= \bigoplus_{|I|=1-p}\RR^q\MM(*Y_I).$$
Since $\EII_1^{p,q}\Rightarrow_p\RR^{p+q}\MM(*Y)$, we obtain what we wanted to prove.
\end{proof}

Note that when $r=1$ the spectral sequence is trivial and gives no additional information. When $r=2$ we have several short exact sequences of the form
$$0\lra E_{\infty}^{-1,n+1}\lra\RR^n\MM(*Y)\lra E_{\infty}^{0,n}\lra0,$$
so in this case we already obtain a different (and more detailed) information than by using the Mayer-Vietoris long exact sequence \cite[I.6.2]{Me}.

\section{Relative Mayer-Vietoris spectral sequence}

In this section we will present a relative version of the above mentioned spectral sequence, but for $\DD_X$-modules, by using the direct image functor for them.

\begin{defi}
Let $f:X\ra Y$ be a morphism of smooth varieties. The direct image of $\DD_X$-modules is the functor $f_+:\D(\DD_X)\ra \D(\DD_Y)$ given by
$$f_+\MM:=\RR f_*\left(\DD_{Y\leftarrow X}\otimes_{\DD_X}^{\LL}\MM\right),$$
where $\DD_{Y\leftarrow X}$ is the $\left(f^{-1}\DD_Y,\DD_X\right)$-bimodule
$$\DD_{Y\leftarrow X}:=\omega_X\otimes_{f^{-1}\OO_Y}f^{-1}\Hom_{\OO_{Y}} \left(\omega_Y,\DD_Y\right) ,$$
called the transfer $\DD$-module for the direct image of $f$. In the formula, $\omega_X$ is the right $\DD_X$-module of top differential forms on $X$.
\end{defi}

\begin{nota}\label{directa}
When $f:U\hra X$ is an open immersion, $f_+=\RR f_*$, because $\DD_{X\leftarrow U}\cong f^{-1}\DD_X=\DD_U$.

When $f:X=Y\times Z\ra Z$ is a projection, $\DD_{Z\leftarrow X}\otimes_{\DD_X}^{\LL}\MM$ is nothing but a shifting by $\dim Y$ places to the left of the relative de Rham complex of $\MM$
$$\DR_f(\MM):=0\lra\MM\lra\MM\otimes_{\OO_{X}}\Omega_{X/Z}^1\lra\ldots\lra \MM\otimes_{\OO_{X}}\Omega_{X/Z}^n\lra0,$$
so we will have that $f_+\cong\RR f_*\DR_f(\bullet)[\dim Y]$ (\hspace{-.5pt}\cite[I.5.2.2]{Me}). When $Z$ is a point, the functor $\RR f_*$ is just the derived global sections functor $\RR\Gamma(X,\bullet)$, and in that special case the functor $f_+$ is just a shifting of global de Rham cohomology.
\end{nota}

Let us introduce now another important image functor in $\DD$-module theory.

\begin{defi}
Let $f:X\ra Y$ be a morphism of smooth varieties. The inverse image of $\DD_X$-modules is the functor $f^+:\D(\DD_Y)\ra \D(\DD_X)$ given by
$$f^+\MM:=\DD_{X\ra Y}\otimes_{f^{-1}\DD_Y}^{\LL}f^{-1}\MM,$$
where $\DD_{X\ra Y}$ is the $\left(\DD_X,f^{-1}\DD_Y\right)$-bimodule
$$\DD_{X\ra Y}:=\OO_X\otimes_{f^{-1}\OO_Y} f^{-1}\DD_Y,$$
called the transfer $\DD$-module for the inverse image of $f$.
\end{defi}

\begin{nota}\label{inversa}
Just by substituting the expression of $\DD_{X\ra Y}$ into the formula for $f^+$ we see that the inverse image of $\DD_X$-modules coincides with the derived inverse image of $\OO_X$-modules, $\LL f^*\bullet=\OO_X\otimes_{f^{-1}\OO_Y}^{\LL} f^{-1}\bullet$. Then, if $f$ is a flat morphism, $f^+=f^*$. In the special case in which $f:U\hra X$ is an open immersion, $f^+=f^{-1}$.
\end{nota}

\begin{teo}\label{MVrel}
Let $f:X\lra Z$ be a morphism between two smooth algebraic varieties and let $Y=\bigcup_i Y_i$ a closed subvariety of $X$. Then, for every $\MM\in \text{\emph{D}}_\text{\emph{c}}^{\text{\emph{b}}}(\DD_X)$, there exists a spectral sequence of complexes of $\DD_Z$-modules of the form
$$E_1^{p,q}=\bigoplus_{|I|=1-p}\HH^q f_+\RR\MM(*Y_I) \Rightarrow_p\HH^{p+q}f_+\RR\MM(*Y).$$
\end{teo}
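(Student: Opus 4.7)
The plan is to mirror the proof of Theorem \ref{MV} but carry the construction through the derived direct image $f_+$, extracting the desired spectral sequence from the filtration by columns of the Mayer--Vietoris bicomplex.

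First I would choose a bounded-below resolution $\II^\bullet$ of $\MM$ in the category of quasi-coherent $\DD_X$-modules such that every $\II^q$ is injective there. Since $\DD_X$ is locally free over $\OO_X$, the induction functor $\DD_X\otimes_{\OO_X}(-)$ is an exact left adjoint to the forgetful functor into quasi-coherent $\OO_X$-modules, so each $\II^q$ is simultaneously quasi-coherent $\DD_X$-injective and quasi-coherent $\OO_X$-injective. Hence Proposition \ref{exacta} applies to the bicomplex
$$C^{p,q} := \MV^p(\II^q)$$
of quasi-coherent $\DD_X$-modules.

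Running the argument of Theorem \ref{MV} inside $\Mod(\DD_X)$, the first usual spectral sequence of $C^{\bullet,\bullet}$ collapses and shows that $\Tot(C^{\bullet,\bullet})$ is quasi-isomorphic to $\II^\bullet(*Y)$, hence represents $\RR\MM(*Y)$ in $\D(\DD_X)$. Moreover, for each $p$ the column $C^{p,\bullet}=\bigoplus_{|I|=1-p}\II^\bullet(*Y_I)$ represents $\bigoplus_{|I|=1-p}\RR\MM(*Y_I)$: each term $\II^q(*Y_I)$ is a quasi-coherent injective $\OO_X$-module by Remark \ref{acyc}, and the derivation of $\bullet(*Y_I)$ in the $\DD_X$-setting can be effected using the same injective resolution.

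Next I would apply $f_+$ to $\Tot(C^{\bullet,\bullet})$, retaining the horizontal filtration coming from the column degree. Concretely, one replaces the bicomplex $C^{\bullet,\bullet}$ by a Cartan--Eilenberg-type resolution consisting of $\DD_X$-flat and $f_*$-acyclic quasi-coherent modules, and totalises; the outcome is a filtered complex in $\Mod(\DD_Z)$ which genuinely computes $f_+\RR\MM(*Y)$. The spectral sequence of this filtered complex has first page
$$E_1^{p,q}=\HH^q f_+\bigl(C^{p,\bullet}\bigr)=\bigoplus_{|I|=1-p}\HH^q f_+\RR\MM(*Y_I),$$
where the second equality uses that $f_+$ commutes with finite direct sums, and it abuts to $\HH^{p+q}f_+\RR\MM(*Y)$, which is exactly the statement.

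The main technical obstacle is organising this second resolution step so that the resulting totalisation simultaneously computes $f_+\RR\MM(*Y)$ and respects the horizontal filtration of $C^{\bullet,\bullet}$; both constituents of $f_+=\RR f_*(\DD_{Z\leftarrow X}\otimes_{\DD_X}^{\LL}-)$ must be rendered exact on the chosen resolution. The cleanest path is either the explicit Cartan--Eilenberg construction sketched above, or, more conceptually, a direct invocation of the filtered derived category formalism referenced in \cite{Ro}, which yields the spectral sequence and its convergence as a formal consequence of the identifications made in the previous paragraphs.
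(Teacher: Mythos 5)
Your strategy is viable but genuinely different from the paper's, and one step needs more care than you give it. The paper does not push the Mayer--Vietoris bicomplex through a general $f_+$: it first reduces to the case of a projection $\pi\colon T\times Z\to Z$ via the graph factorization, proving $i_+\RR\MM(*Y)\cong\RR(i_+\MM)(*Y)$ for closed immersions by smooth base change; for a projection it then exploits the explicit realization $\DD_{Z\leftarrow X}\otimes^{\LL}_{\DD_X}\MM\cong\DR_\pi(\MM)[\dim T]$, whose terms are quasi-coherent $\OO_X$-modules even though its differentials are only $\kk$-linear, takes a resolution by quasi-coherent injective $\OO_X$-modules, and runs the two usual spectral sequences of the single bicomplex $\pi_*\MV^p(\II^q)$. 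Your route avoids the reduction entirely, which is a real simplification, and your observation that the forgetful functor from $\DD_X$-modules to $\OO_X$-modules preserves injectives (exact left adjoint $\DD_X\otimes_{\OO_X}-$) is correct and lets Proposition \ref{exacta} apply to a $\DD_X$-injective resolution, so $\Tot(C^{\bullet,\bullet})$ with its column filtration, whose graded pieces are $\bigoplus_{|I|=1-p}\II^\bullet(*Y_I)[-p]$, does represent $\RR\MM(*Y)$ in $\D(\DD_X)$.

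The soft spot is the passage through $f_+$. As written, a single Cartan--Eilenberg resolution by modules that are simultaneously $\DD_X$-flat and $f_*$-acyclic is not something you can produce: the two halves of $f_+=\RR f_*(\DD_{Z\leftarrow X}\otimes^{\LL}_{\DD_X}-)$ are derived in opposite directions, and flat objects are not $f_*$-acyclic. One needs a two-stage process (a locally free resolution of $\DD_{Z\leftarrow X}$ as a right $\DD_X$-module for the tensor, followed by an injective or Godement resolution for $\RR f_*$), together with a check that the column filtration survives both stages with the correct graded pieces; this is where the actual work lies, and it is precisely the part your proposal defers. Fortunately it can be bypassed: the column filtration is finite and degreewise split, so it yields distinguished triangles $F^{p+1}\to F^p\to \bigoplus_{|I|=1-p}\RR\MM(*Y_I)[-p]\to$ in $\D(\DD_X)$; applying the triangulated functor $f_+$ and taking the spectral sequence of the resulting exact couple in $\D(\DD_Z)$ gives $E_1^{p,q}=\bigoplus_{|I|=1-p}\HH^qf_+\RR\MM(*Y_I)$ converging to $\HH^{p+q}f_+\RR\MM(*Y)$, with convergence guaranteed by finiteness of the filtration. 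If you take that road, say so explicitly rather than citing \cite{Ro} for a filtered derived category formalism it does not contain; Rotman only treats exact couples and filtered complexes of modules.
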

\begin{proof}
First take into account that every morphism can be decomposed as a closed immersion into its graph followed by the canonical projection over the second component, so if we prove that for any closed immersion $i:X\lra Z$ we have that $i_+\RR\MM(*Y)\cong\RR(i_+\MM)(*Y)$, we will only need to prove the statement of the theorem in the case in which $f=\pi:X=T\times Z\lra Z$ is a projection.

Indeed, consider the cartesian diagram given by
$$\xymatrixcolsep{3pc}\xymatrix{\ar @{} [dr] |{\Box} X-Y \ar[d]_{\bar{i}} \ar[r]^{j} & X \ar[d]^{i}\\
 Z-Y \ar[r]^{\bar{j}} & Z}.$$
We know that $\MM$ is a coherent $\DD_X$-module, hence quasi-coherent $\OO_X$-module, so $\RR\MM(*Y)\cong j_+j^+\MM$. By the smooth base change theorem \cite[1.7.3]{HTT},
$$i_+j_+j^+\MM=\bar{j}_+\bar{i}_+ j^+\MM\cong\bar{j}_+\bar{j}^+i_+\MM.$$
Now $i_+\MM$ is a quasi-coherent $\OO_Z$-module (\hspace{-.5pt}\cite[1.5.24]{HTT}), whence $\bar{j}_+\bar{j}^+i_+\MM\cong\RR(i_+\MM)(*Y)$ and we are done.

Thus assume that $f$ is a projection as in the first paragraph. For every $I\subset\{1,\ldots,r\}$, let us define $U_I=X-Y_I$ and denote by $j_I$ the open immersion of $U_I$ into $X$, and define also $j_0:U_0:=X-Y\hra X$. Since $\MM$ is of coherent cohomology over $\DD_X$, it is of quasi-coherent cohomology over $\OO_X$, and by virtue of remarks \ref{acyc}, \ref{directa} and \ref{inversa}, $\RR\MM(*Y_I)\cong\RR j_{I,*}j_I^{-1}\MM\cong j_{I,+}j_I^+\MM$. Therefore we will have that $\pi_+\RR\MM(*Y_I)=\pi_+j_{I,+}j_I^+\MM=(\pi\circ j_I)_+j_I^+\MM$, by \cite[1.5.21]{HTT}. As a consequence,
$$\pi_+\RR\MM(*Y_I)=\RR(\pi\circ j_I)_*\left(\DD_{Z\leftarrow U_I}\otimes_{\DD_{U_I}}^{\LL}j_I^{-1}\MM\right).$$
Now take into account that $\DD_{U_I}=j_I^{-1}\DD_X$ and $\DD_{Z\leftarrow U_I}=j_I^{-1}\DD_{Z\leftarrow X}$, so we can write $\pi_+\RR\MM(*Y_I)$ as
$$\RR (\pi\circ j_I)_*j_I^{-1}\left(\DD_{Z\leftarrow X}\otimes_{\DD_X}^{\LL}\MM\right).$$
The analogous result holds for $\pi_+\RR\MM(*Y)\cong\RR (\pi\circ j_0)_*j_0^{-1}\left(\DD_{Z\leftarrow X}\otimes_{\DD_X}^{\LL}\MM\right)$.

Recall that $\DD_{Z\leftarrow X}\otimes_{\DD_X}^{\LL}\MM\cong\DR_\pi(\MM)[\operatorname{codim}_XZ]$ because of $\pi$ being a projection. $\DR_\pi(\MM)$ does not belong to the category of complexes of quasi-coherent $\OO_X$-modules because its chain maps are just linear over our field of definition; however, it is a complex in the category of sheaves of abelian groups whose objects are quasi-coherent $\OO_X$-modules. This slight difference allows us to take an injective Cartan-Eilenberg resolution of it in the category of sheaves of abelian groups, but having injective quasi-coherent $\OO_X$-module as objects. To see this, just note that in the dual of the proof of \cite[5.7.2]{We} every (classical) injective resolution that we form can be taken within the category of quasi-coherent $\OO_X$-modules. The problem appears when one has to lift linear maps, since it cannot provide a morphism of $\OO_X$-modules. Nevertheless, this drawback can be controlled because chain morphisms do not affect the properties of the objects, and taking the total complex of that Cartan-Eilenberg resolution, we turn out to have an injective resolution $\II^{\bullet}$ of $\DR_\pi(\MM)[\operatorname{codim}_XZ]$ in the category of sheaves of abelian groups whose objects are much more than that, since they are quasi-coherent $\OO_X$-modules.

Consequently, let us build the bicomplex $\CCC$ with objects
$$C^{p,q}=\bigoplus_{|I|=1-p}(\pi\circ j_I)_*j_I^{-1}\II^q=\pi_*\MV^p(\II^q),$$
where the last equality holds because of our careful choice of $\II^{\bullet}$, being the vertical and horizontal differentials the image by $\pi_*$ of those from $\MV^p(\II^{\bullet})$ and the differentials of $\MV^{\bullet}(\II^q)$ multiplied by $(-1)^q$, respectively.

As in the proof of theorem \ref{MV}, we will take the usual spectral sequences for that double complex, which has $r$ bounded below nonvanishing columns. Then those spectral sequences will converge to the cohomology of the total complex associated with $\CCC$.

Since $\pi_*$ is a left exact functor and the $\II^q(*Y_I)$ are acyclic, the first usual spectral sequence has as first page
$$\EI_1^{p,q}=\HH^p\left(C^{\bullet,q}\right)=\left\{\begin{array}{cl}
\pi_*\II^q(*Y) & p=0\\
0 & \text{otherwise}\end{array}\right.$$
This is because we were working with horizontal differentials, which are $\OO_X$-linear. Therefore the second sheet of this spectral sequence will be
$$\EI_2^{p,q}=\HH^q\left(\HH^p\left(C^{\bullet,q}\right)\right)\cong \left\{\begin{array}{cl}
\RR^q(\pi\circ j_0)_*j_0^{-1}\DR_\pi(\MM)[\dim T] & p=0\\
0 & \text{otherwise}\end{array}\right.$$
As it happened in the proof of theorem \ref{MV}, this spectral sequence collapses, and in consequence
$$\HH^n\left(\Tot(\CCC)\right)=\EI_2^{0,n}\cong\HH^n\pi_+ \RR\MM(*Y).$$
Note that the last isomorphism is just a consequence of having the isomorphism $\DD_{Z\leftarrow X}\otimes_{\DD_X}^{\LL}\MM\cong\DR_\pi(\MM)[\dim T]$ with complexes of quasi-coherent $\OO_X$-modules as objects.

Let us see what expression the other usual spectral sequence has. Its first page is the vertical cohomology of the double complex, that is to say,
$$\EII_1^{p,q}=\HH^q(C^{p,\bullet})\cong\bigoplus_{|I|=1-p}\RR^q (\pi\circ j_I)_*j_I^{-1}\II^q\cong\bigoplus_{|I|=1-p} \HH^q\pi_+\RR\MM(*Y_I).$$
There is no objection to that; what we only needed were kernels and cokernels, and they are the same in both senses.

In conclusion,
$$E_1^{p,q}=\bigoplus_{|I|=1-p} \HH^q\pi_+\RR\MM(*Y_I)\Rightarrow_p \HH^{p+q}\pi_+ \RR\MM(*Y),$$
as desired.
\end{proof}

\section{Arrangements of hyperplanes}

Now we will exemplify the usefulness of theorem \ref{MVrel} with the calculation of the global de Rham cohomology of the complement of an arrangement of hyperplanes $\Acal$ over an algebraically closed field $\kk$ of characteristic zero. As we will see, it is much influenced by the combinatorics of its intersection poset. We will need a special kind of global Künneth formula, stated as following:

\begin{lema}
For any two smooth varieties $X$ and $Y$, we have that
$$\pi_{X\times Y,+}\OO_{X\times Y}\cong\pi_{X,+}\OO_X\otimes_{\kk}\pi_{Y,+}\OO_Y.$$
\end{lema}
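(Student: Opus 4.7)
The plan is to reduce the statement to a classical Künneth argument at the level of de Rham complexes. Since $\pi_X$, $\pi_Y$ and $\pi_{X\times Y}$ are all projections to a point, Remark \ref{directa} lets me rewrite
$$\pi_{X,+}\OO_X\cong\RR\Gamma(X,\Omega_X^\bullet)[\dim X],\qquad \pi_{Y,+}\OO_Y\cong\RR\Gamma(Y,\Omega_Y^\bullet)[\dim Y],$$
and analogously for $X\times Y$. Since $\dim(X\times Y)=\dim X+\dim Y$, the shifts on both sides match up, so the problem reduces to producing a natural isomorphism
$$\RR\Gamma(X\times Y,\Omega_{X\times Y}^\bullet)\cong \RR\Gamma(X,\Omega_X^\bullet)\otimes_{\kk}\RR\Gamma(Y,\Omega_Y^\bullet).$$

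The second step is to decompose the de Rham complex of the product. Writing $p_1,p_2$ for the projections from $X\times Y$ onto its factors, the splitting $\Omega_{X\times Y}^1\cong p_1^*\Omega_X^1\oplus p_2^*\Omega_Y^1$ gives, upon taking exterior powers,
$$\Omega_{X\times Y}^n\cong\bigoplus_{p+q=n}p_1^*\Omega_X^p\otimes_{\OO_{X\times Y}}p_2^*\Omega_Y^q,$$
and a direct check on Leibniz rules shows that the differential on $\Omega_{X\times Y}^\bullet$ turns it into the total complex of the bicomplex $p_1^*\Omega_X^\bullet\otimes p_2^*\Omega_Y^\bullet$ (with the usual Koszul signs).

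The third step is a sheaf-level Künneth statement in each bidegree. Because every $\Omega_X^p$ and $\Omega_Y^q$ is locally free of finite rank, flat base change together with the standard algebraic Künneth formula for quasi-coherent cohomology of schemes over the field $\kk$ yields
$$\RR\Gamma(X\times Y,p_1^*\Omega_X^p\otimes p_2^*\Omega_Y^q)\cong \RR\Gamma(X,\Omega_X^p)\otimes_{\kk}\RR\Gamma(Y,\Omega_Y^q).$$
Assembling these isomorphisms across the bicomplex and passing to total complexes then gives the claim.

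The main obstacle is the same one encountered inside the proof of Theorem \ref{MVrel}: the differentials of $\Omega^\bullet$ are only $\kk$-linear, not $\OO$-linear, so the hypercohomology Künneth cannot simply be invoked as a black box. To handle this I would follow the device used there: take Cartan-Eilenberg resolutions of $\Omega_X^\bullet$ and $\Omega_Y^\bullet$ in the category of sheaves of $\kk$-vector spaces whose components are nevertheless injective quasi-coherent $\OO$-modules, pass to total complexes $\II_X^\bullet$ and $\II_Y^\bullet$, and verify that the external tensor product $p_1^*\II_X^\bullet\otimes p_2^*\II_Y^\bullet$ is acyclic for $\RR\Gamma(X\times Y,-)$ and computes the derived functor on $\Omega_{X\times Y}^\bullet$. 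This step is where local freeness of the $\Omega_X^p,\Omega_Y^q$ and the affine/quasi-coherent Künneth formula over $\kk$ do the real work; once it is in hand the desired isomorphism follows from the definition of the tensor product of complexes of $\kk$-vector spaces.
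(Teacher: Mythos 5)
Your proof is essentially correct in outline, but it takes a genuinely different and much longer route than the paper. The paper's proof is purely formal and stays entirely inside the $\DD$-module formalism: it writes $\OO_{X\times Y}=p_2^+\OO_Y$, applies the smooth base change theorem to the cartesian square $X\times Y\to Y$, $X\to\{*\}$ to get $p_{1,+}p_2^+\OO_Y\cong\pi_X^+\pi_{Y,+}\OO_Y$, and concludes with the projection formula \cite[1.7.5]{HTT}. That is three lines and needs no resolutions at all. Your approach instead unwinds $\pi_+$ into hypercohomology of de Rham complexes via Remark \ref{directa} and proves an explicit K\"unneth theorem for algebraic de Rham cohomology; this is the classical argument (it is essentially how Grothendieck treats the product case) and it buys a more concrete understanding of where the tensor decomposition comes from, at the price of having to manage resolutions of complexes whose differentials are only $\kk$-linear.

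One step in your plan deserves more care than you give it. After choosing resolutions $\II_X^{\bullet}$ and $\II_Y^{\bullet}$ with injective quasi-coherent components, you need the comparison map from $p_1^*\Omega_X^{\bullet}\otimes p_2^*\Omega_Y^{\bullet}$ to $p_1^*\II_X^{\bullet}\otimes p_2^*\II_Y^{\bullet}$ to be a quasi-isomorphism. Factoring through $p_1^*\II_X^{\bullet}\otimes p_2^*\Omega_Y^{\bullet}$, the first stage is fine because the $\Omega_Y^q$ are locally free, but the second stage requires tensoring the quasi-isomorphism $p_2^*\Omega_Y^{\bullet}\to p_2^*\II_Y^{\bullet}$ with $p_1^*\II_X^p$, and injective quasi-coherent modules are in general not flat (already over a discrete valuation ring the injective hull of the residue field is torsion). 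So "local freeness of the $\Omega^p$" alone does not close this gap. The standard fix is to compute both sides with \v{C}ech complexes for affine covers of $X$ and $Y$ and the product cover of $X\times Y$: on affines the quasi-coherent K\"unneth identity $\Gamma(U\times V,\mathcal{F}\boxtimes\mathcal{G})=\Gamma(U,\mathcal{F})\otimes_{\kk}\Gamma(V,\mathcal{G})$ is immediate, the \v{C}ech--de Rham total complex of the product is literally the tensor product of those of the factors, and the algebraic K\"unneth theorem over the field $\kk$ finishes the argument with no flatness issues. With that substitution your proof goes through.
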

\begin{proof}
Let us consider the following cartesian diagram:
$$\xymatrix{\ar @{} [dr] |{\Box} X\times Y \ar[d]_{p_1} \ar[r]^{p_2} & Y \ar[d]^{\pi_Y}\\
 X \ar[r]^{\pi_X} & \{*\}}.$$
Obviously, $\pi_{X\times Y,+}\OO_{X\times Y}\cong\pi_{X,+}p_{1,+}p_2^+\OO_Y$, so by the smooth base change theorem we have that
$$\pi_{X\times Y,+}\OO_{X\times Y}\cong\pi_{X,+}\pi_X^+\pi_{Y,+}\OO_Y\cong \pi_{X,+}\left(\OO_X\otimes_{\OO_X}\pi_X^+\pi_{Y,+}\OO_Y\right)\cong \pi_{X,+}\OO_X\otimes_{\kk}\pi_{Y,+}\OO_Y,$$
where the last isomorphism is given by the projection formula \cite[1.7.5]{HTT}.
\end{proof}

Let us return to arrangements. Even though the arrangement is projective, its complement is still affine, since we can consider one of the hyperplanes as the one at infinity, so we will formulate the result just for affine arrangements. Thus let $X=\AAA^n$ and $Y$ be the subvariety of $\AAA^n$ given by the union of the hyperplanes of $\Acal$, which we will rename to $Y_1,\ldots,Y_r$. Let $\MM=\OO_{\AAA^n}$. Denote by $\pi_{Z}$ the projection to a point from a variety $Z$. We have the spectral sequence
$$E_1^{p,q}=\bigoplus_{|I|=1-p}\HH^q\pi_{\AAA^n,+}\RR\OO_{\AAA^n}(*Y_I) \Rightarrow_p\HH^{p+q}\pi_{\AAA^n,+}\RR\OO_{\AAA^n}(*Y).$$
If $\Acal$ is not essential, let us denote its rank by $r<n$ and the variety formed by the essential arrangement associated with $\Acal$ by $Y'$. Then $\AAA^n-Y\cong(\AAA^r-Y')\times\AAA^{n-r}$, so by the global Künneth formula we know that $\pi_{\AAA^n,+}\RR\OO_{\AAA^n}(*Y)\cong\pi_{\AAA^r,+}\RR\OO_{\AAA^r}(*Y')[n-r]$.

In order to use the spectral sequence, we must know all of the $\pi_{\AAA^n,+}\RR\OO_{\AAA^n}(*Y_I)$, for which we need to do a little work. Recall that for every closed subvariety $T\subset X$ and every $\MM\in \Dc(\DD_X)$, we have the isomorphism $\RR\MM(*T)\cong j_+j^+\MM$, $j$ being the open immersion of the complement of $T$ into $X$. Moreover, we can form the triangle in $\D\left(\DD_{X}\right)$
$$\RR\Gamma_{[T]}(\MM)\lra \MM\lra j_+j^+\MM=\RR\MM(*T)\lra,$$
associated with the diagram $X-T\stackrel{j}{\ra}X\stackrel{i}{\leftarrow}T$, and if $T$ is smooth we can replace $\RR\Gamma_{[T]}(\MM)$ by $i_+i^+\MM[-\operatorname{codim}_XT]$ (cf. \cite[1.7.1]{HTT}).

\begin{prop}
Let $Y$ be the variety formed by the union of the hyperplanes $Y_i$, with $i=1,\ldots,r$, of an essential affine arrangement over an algebraically closed field of characteristic zero $\kk$.

For any pair of integers $(p,q)$, let
$$d_{p,q}=\operatorname{card}\{\emptyset\neq I\subseteq \{1,\ldots,r\}\,|\,|I|=1-p,\,\dim Y_I=(n-q-1)/2\},$$
and let $p_{q,0}$ and $p_{q,1}$ be, for a fixed $q\neq-n$, the least and greatest $p$, if any, such that $d_{p,q}\neq0$, respectively. Then, for any $i=-n+1,\ldots,0$ there exists a unique integer $q$ such that $q+p_{q,1}=i$, and
$$\dim\HH^i\pi_{\AAA^n,+}\RR\OO_{\AAA^n}(*Y)= (-1)^{-p_{q,1}}\sum_{p=p_{q,0}}^{p_{q,1}}(-1)^pd_{p,q},$$
If $i=-n$, $\HH^i\pi_{\AAA^n,+}\RR\OO_{\AAA^n}(*Y)=\kk$, and if $i\notin\{-n,\ldots,0\}$, $\HH^i\pi_{\AAA^n,+}\RR\OO_{\AAA^n}(*Y)$ vanishes.
\end{prop}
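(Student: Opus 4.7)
I will derive the statement by computing directly with the spectral sequence of Theorem \ref{MVrel} applied to $f=\pi_{\AAA^n}$ and $\MM=\OO_{\AAA^n}$. The first task is to identify the $E_1$-terms. For any nonempty $I$, the intersection $Y_I$ is a linear subspace of some dimension $d=\dim Y_I$, so after a linear change of coordinates $\AAA^n-Y_I\cong\AAA^d\times(\AAA^{n-d}-\{0\})$. Combining the identifications $\RR\OO_{\AAA^n}(*Y_I)\cong j_{I,+}j_I^+\OO_{\AAA^n}\cong j_{I,+}\OO_{\AAA^n-Y_I}$ provided by Remarks \ref{acyc} and \ref{inversa} with the K\"unneth lemma just proved yields
$$\pi_{\AAA^n,+}\RR\OO_{\AAA^n}(*Y_I)\cong\pi_{\AAA^d,+}\OO_{\AAA^d}\otimes_{\kk}\pi_{(\AAA^{n-d}-\{0\}),+}\OO_{\AAA^{n-d}-\{0\}}.$$
By Remark \ref{directa} the first factor is $\kk$ in degree $-d$, and the second, being the shifted algebraic de Rham cohomology of $\AAA^{n-d}-\{0\}$ (which is $\kk$ in degrees $0$ and $2(n-d)-1$), lives in degrees $-(n-d)$ and $n-d-1$. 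Therefore $\HH^q\pi_+\RR\OO(*Y_I)=\kk$ exactly when $q\in\{-n,\,n-1-2d\}$, giving $E_1^{p,q}=\kk^{d_{p,q}}$ for $q\neq-n$ and $E_1^{p,-n}=\kk^{\binom{r}{1-p}}$.

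The row $q=-n$ is the unaugmented simplicial cochain complex of the $(r-1)$-simplex on the $r$ hyperplanes, whose only nonzero cohomology is $\kk$ at $p=0$; this accounts for $\HH^{-n}\cong\kk$. For each $q\neq-n$, the row decomposes as a direct sum, indexed by the flats $L$ of dimension $d=(n-q-1)/2$, of the cochain complex supported on $\{I\subseteq H_L:\bigcap_{i\in I}Y_i=L\}$, where $H_L=\{i:Y_i\supseteq L\}$. The crucial technical step is showing that the cohomology of each such piece concentrates at the rightmost position $p=p_{q,1}=d-n+1$. One way to see this is to view the complex at $L$ as the relative cochain complex of the contractible simplex on $H_L$ modulo the subcomplex of subsets whose intersection strictly contains $L$, and to appeal to a Folkman-type theorem stating that the reduced cohomology of this ``non-spanning'' subcomplex of a geometric lattice is concentrated in top degree.

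Granting this concentration, the map $q\mapsto p_{q,1}+q=-d$ is injective, so for each $i\in\{-n+1,\ldots,0\}$ exactly one row contributes, namely $q=n-1+2i$. Moreover, any candidate higher differential $d_r\colon E_r^{p_{q,1},q}\to E_r^{p_{q,1}+r,\,q-r+1}$ would have to land on $(p_{q',1},q')$ with $q'=q-r+1$, but the relation $p_{q,1}=(1-n-q)/2$ forces $p_{q',1}-p_{q,1}=(r-1)/2\neq r$ for $r\geq2$, so $E_\infty=E_2$. Invariance of the row Euler characteristic across the pages then yields
$$\dim\HH^i\pi_+\RR\OO(*Y)=\dim E_\infty^{p_{q,1},q}=(-1)^{-p_{q,1}}\sum_{p=p_{q,0}}^{p_{q,1}}(-1)^p d_{p,q},$$
while for $i\notin\{-n,\ldots,0\}$ no nonzero $E_\infty$ entry sits on the diagonal $p+q=i$, so $\HH^i$ vanishes. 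I expect the main obstacle to be verifying the row-concentration claim; the decomposition over flats reduces it to a combinatorial statement about the order complex of a geometric lattice, for which Folkman's theorem or a direct induction on $|H_L|$ should suffice.
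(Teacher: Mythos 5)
Your proposal is correct and follows essentially the same route as the paper: identify the $E_1$-terms via the K\"unneth lemma and the excision triangle for a point, show that each row of $E_1$ has cohomology only at its rightmost nonzero position, deduce degeneration at the second page from the relation $p_{q,1}=(1-n-q)/2$, and read off the dimensions from the row Euler characteristics. The one point of divergence is the row-concentration claim, which you rightly single out as the crux. The paper dispatches it in one sentence, asserting that the rows of $E_1$ inherit from $\MV(\II)$ (Proposition \ref{exacta}) the property of having cohomology in a single degree; as literally stated that inference is not immediate, since the exactness of $\MV(\II)$ away from degree zero is what identifies the abutment through the first usual spectral sequence, not what controls the horizontal cohomology of the second one. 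Your alternative is the standard way to make this step precise: split the row $q\neq -n$ over the flats $L$ of dimension $(n-q-1)/2$ (legitimate because the component of the differential from $I$ to $I_j$ can be nonzero in degree $q$ only when $Y_{I_j}=Y_I$, for degree reasons, and is then the identity), and identify the summand at $L$ with the quotient of the acyclic full simplex on $H_L$ by the subcomplex of non-spanning subsets, whose reduced homology is concentrated in top dimension by Folkman's theorem on geometric lattices. So the gap you flag is genuine, but the route you propose does close it, and is if anything more convincing than the paper's own justification of the same point.
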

\begin{proof}
Note that the last statement follows from the fact that $\pi_{\AAA^n,+}$ is nothing but taking global de Rham cohomology, shifted $n$ places to the left, and we are dealing with affine varieties and quasi-coherent $\OO$-modules.

For every $I\subset\{1,\ldots,r\}$ let $r_I=\dim Y_I$. We have that $\AAA^n-Y_I\cong\left(\AAA^{n-r_I}-\{\underline{0}\}\right)\times\AAA^{r_I}$, so we only need, by virtue of the global Künneth formula above, to compute the global de Rham cohomology of the affine space $\AAA^m$ minus one point for every $m$. In order to do that we can use the excision triangle as above with $T=\{\underline{0}\}$, namely
$$i_+i^+\OO_{\AAA^m}[-m]\lra\OO_{\AAA^m}\lra \RR\OO_{\AAA^m}(*\{\underline{0}\})\lra.$$
Applying the direct image functor associated with the projection $\pi_{\AAA^m}$ we get another triangle of graded $\kk$-vector spaces
$$\kk[-m]\lra\kk[m]\lra\pi_{\AAA^m,+} \RR\OO_{\AAA^m}(*\{\underline{0}\}),$$
so $\pi_{\AAA^m,+}\RR\OO_{\AAA^m}(*\{\underline{0}\})=\kk[m] \oplus\kk[-m+1]$.

Thus for every $I\subset\{1,\ldots,r\}$ we have that
$$\pi_{\AAA^n,+}\RR\OO_{\AAA^n}(*Y_I)=\left\{\begin{array}{cl}
\kk[n]\oplus\kk[-n+2r_I+1] & \text{if }Y_I\neq\emptyset\\
\kk[n] & \text{if }Y_I=\emptyset\end{array}\right..$$

By definition, the first page of our relative Mayer-Vietoris spectral sequence is
$$E_1^{p,q}=\left\{\begin{array}{cl}
\kk^{\binom{r}{1-p}}& p=-(r-1),\ldots,0 \text{ , }q=-n\\
\kk^{d_{p,q}}& q\neq-n \text{ and } d_{p,q}\neq0\\
0& \text{otherwise}\end{array}\right..$$
For a fixed $q$, the differentials between the $E_1^{p,q}$ terms are induced by the differentials in $\MV(\II)$ for an injective $\OO_{\AAA^n}$-module $\II$, whose cohomologies vanished except in degree zero, and so will happen with $E_1^{\bullet,q}$.

Whenever we have an exact sequence of vector spaces of the form
$$V:0\lra V_0\lra\ldots\lra V_s,$$
the dimension of the last cohomology (that is, the $s$-th one), is
$$\dim \operatorname{coker}(V_{s-1}\ra V_s)=(-1)^s\sum_{i=0}^s(-1)^i\dim V_i,$$
which can be easily proved by induction. Then, when $q=-n$, the dimension of the last cohomology space of $E_1^{\bullet,-n}$ is
$$\sum_{i=1}^r\binom{r}{i}(-1)^{i-1}=-\left((1-1)^r-1\right)=1,$$
while for other $q$ such that $d_{p,q}\neq0$ for some $p$, the dimension of the last cohomology space of $E_1^{\bullet,q}$ is
$$e_{p,q}:=(-1)^{-p_{q,1}}\sum_{p=p_{q,0}}^{p_{q,1}}(-1)^pd_{p,q},$$
vanishing otherwise.

Thus we can affirm that at the second sheet of our spectral sequence,
$$\dim E_2^{p,q}=\left\{\begin{array}{cl}
e_{p,q} & \text{if }q\neq -n, p=p_{q,1} \text{ and } d_{p,q}\neq0\\
1 & \text{if }p=0, q=-n\\
0& \text{otherwise}\end{array}\right..$$
By definition, apart from $q=-n$, $E_2^{p,q}=0$ if $q-n$ is even. It is easy to see that $E_2=E_{\infty}$, for any $d_r^{p,q}$ maps $E_r^{p,q}$ to $E_r^{p+r,q-r+1}$, and for no $r$ we can go from one point of the form $(p,n+2k-1)$ neither to another $(p',n+2k'-1)$ nor to $(0,-n)$, for any couple of integers $k$ and $k'$, so $E_{2}^{p,q}=E_{\infty}^{p,q}\neq0$. Furthermore, note that $p_{q,1}=(1-q-n)/2$, because it is one minus the least amount of distinct hyperplanes which suffice to intersect in a variety of dimension $r=(n-q-1)/2$, which is $n-r$ (we are using here that the arrangement is essential), so for each integer $i=-n,\ldots,0$ there is at most just one pair $(p,q)$ satisfying $p=p_{q,1}$ (when it can be defined) and $p+q=i$. Summing up, our spectral sequence degenerates at the second page and
$$\dim\HH^i\pi_+\RR\OO_{\AAA^n}(*Y)=\left\{\begin{array}{cl}
e_{p,q} & \text{if }e_{p,q}\neq 0 \text{ for }i=p+q\\
1 & \text{if } i=-n\\
0 & \text{otherwise}\end{array}\right..$$
\end{proof}

\begin{coro}\label{arrangposgen}
Under the same assumptions of the proposition, if $\Acal$ is an affine arrangement in general position,
$$\pi_+\RR\OO_{\AAA^n}(*Y)=\bigoplus_{i=-n}^0\kk^{\binom{r}{i+n}}[-i],$$
where $\binom{a}{b}=0$ if $a<b$ by convention.
\end{coro}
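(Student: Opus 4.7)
The plan is to specialize the formula of the proposition by exploiting the maximal rigidity of the intersection combinatorics in general position. First I would reduce to the essential case: if $\Acal$ has $r<n$ hyperplanes then it fails to be essential and the global K\"unneth reduction from the paragraph preceding the proposition gives
\[
\pi_{\AAA^n,+}\RR\OO_{\AAA^n}(*Y) \cong \pi_{\AAA^r,+}\RR\OO_{\AAA^r}(*Y')[n-r],
\]
where $Y'$ is an essential general-position arrangement of $r$ hyperplanes in $\AAA^r$. A straightforward reindexing shows that the shift $[n-r]$ converts the essential-case formula into precisely the statement of the corollary, with the excess values of $i$ absorbed into the convention $\binom{a}{b}=0$ for $a<b$. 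Hence it suffices to handle the essential case, in which $r\geq n$.

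With $\Acal$ essential and in general position, $Y_I$ is empty whenever $|I|>n$, and otherwise $\dim Y_I = n-|I|$. For $q\neq -n$, the equation $\dim Y_I = (n-q-1)/2$ appearing in the definition of $d_{p,q}$ pins $|I|$ down to $(n+q+1)/2$ and hence pins $p = 1-|I|$ down uniquely. Therefore $p_{q,0}=p_{q,1}$ for every relevant $q$, the alternating sum defining $e_{p,q}$ collapses to a single term, and $e_{p,q}=d_{p,q}=\binom{r}{1-p}$.

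The remaining step is bookkeeping. Writing $i=p+q$ and using $p=(1-n-q)/2$ yields $p=1-n-i$, hence $1-p=n+i$, so the proposition gives
\[
\dim \HH^i\pi_{\AAA^n,+}\RR\OO_{\AAA^n}(*Y) = \binom{r}{n+i}
\]
for $i\in\{-n+1,\dots,0\}$, while the case $i=-n$ produces the single copy of $\kk=\kk^{\binom{r}{0}}$ already recorded by the proposition. The only delicate points are checking parities (so that $(n+q+1)/2$ is an integer on exactly the relevant values of $q$) and the boundary behaviour of the K\"unneth shift at $i=-n$; both are routine verifications, and I do not anticipate any serious obstacle.
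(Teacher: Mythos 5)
Your proposal is correct and follows the route the paper intends: the corollary is stated as an immediate specialization of the proposition, and your computation (general position forces $\dim Y_I=n-|I|$, so $p_{q,0}=p_{q,1}$, the alternating sum collapses to $d_{p,q}=\binom{r}{1-p}=\binom{r}{n+i}$, with the non-essential case absorbed by the K\"unneth shift and the convention $\binom{a}{b}=0$ for $a<b$) is exactly that specialization. No gaps.
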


Recall that the proposition reproduces in our context the decomposition given by Orlik and Solomon in \cite[5.3]{OS}, when $\Acal$ is affine and central. Although we can reduce to this case for other kind of arrangements, our result gives a more direct proof of the general case.

\end{document}